\def\mymathfont{\mathbf}
\newcommand{\R}{\mymathfont{R}}
\newcommand{\C}{\mymathfont{C}}
\newcommand{\Z}{\mymathfont{Z}}
\newcommand{\SO}{\mathop{\mathrm{SO}}\nolimits}
\newcommand{\CP}{\mathop{\mymathfont{C}\mymathfont{P}}\nolimits}
\newcommand{\M}{\mathop{\mathcal M}\nolimits}
\newcommand{\spinc}{\mathop{\mathrm{spin}^{\mathrm{c}}}\nolimits}
\newcommand{\pt}{\mathop{\mathrm{pt}}\nolimits}
\newcommand{\lb}{\mathop{\mathcal O}\nolimits_{\CP^1}}
\newtheoremstyle{thm}{6pt plus 1pt minus 1pt}{6pt plus 1pt minus 1pt}{\slshape}{}{\scshape}{.}{5pt plus 1pt minus 1pt}{}
\newtheoremstyle{def}{6pt plus 1pt minus 1pt}{6pt plus 1pt minus 1pt}{}{}{\scshape}{.}{5pt plus 1pt minus 1pt}{}
\newtheoremstyle{rmk}{6pt plus 1pt minus 1pt}{6pt plus 1pt minus 1pt}{}{}{\scshape}{.}{5pt plus 1pt minus 1pt}{}
\newtheoremstyle{claim}{6pt plus 1pt minus 1pt}{6pt plus 1pt minus 1pt}{}{}{\slshape}{.}{5pt plus 1pt minus 1pt}{}
\theoremstyle{thm}
\newtheorem{newstatement}{newstatement}
\newtheorem{theorem}[newstatement]{Theorem}
\theoremstyle{def}
\newtheorem{definition}[newstatement]{Definition}
\theoremstyle{rmk}
\newtheorem{remark}[newstatement]{Remark}
\theoremstyle{claim}
\renewcommand{\section}{\@startsection%
{section}% name
{1}% level
{0mm}% indent
{1.5\bigskipamount}% beforeskip
{0.5\bigskipamount}% afterskip
{\centering\normalsize\sc}}% style
\renewcommand{\subsection}{\@startsection%
{subsection}% name
{2}% level
{0mm}% indent
{0.5\bigskipamount}% beforeskip
{0.5\bigskipamount}% afterskip
{\normalsize\sc}}% style
\renewcommand{\paragraph}{\@startsection%
{paragraph}% name
{4}% level
{0mm}% indent
{\bigskipamount}% beforeskip
{0pt}% afterskip
{\normalsize\sl}}% style
\let\expandafter\oldproof\csname\string\proof\endcsname
\let\oldendproof\endproof
\renewenvironment{proof}[1][\proofname]{%
  \oldproof[\slshape #1]%
}{\oldendproof}
\def\provedboxcontents#1{$\square$}
\let\emph\textsl
\title
{Non-K\"{a}hler complex structures on $\R^4$ II}
\author[Di Scala, Kasuya and Zuddas]{Antonio J Di Scala, Naohiko Kasuya and Daniele Zuddas}
\address{Antonio J Di Scala, Dipartimento di Scienze Matematiche `G.L. Lagrange', Politecnico di Torino, Corso Duca degli Abruzzi 24, 10129, Torino, Italy.}
\email{antonio.discala@polito.it}
\address{Naohiko Kasuya, School of Social Informatics, Aoyama Gakuin University, 5-10-1\break Fuchinobe, Chuo-ku, Sagamihara, Kanagawa 252-5258, Japan.}
\email{nkasuya@si.aoyama.ac.jp}
\address{Daniele Zuddas, Korea Institute for Advanced Study, 85 Hoegiro, Dongdaemun-gu, Seoul 02455, Republic of Korea.}
\email{zuddas@kias.re.kr}
\date{\today}
\keywords{Non-K\"{a}hler complex manifold, meromorphic function, uncountable Picard group}
\subjclass[2010]{32Q15, 57R40, 57R42}
\begin{document}

\begin{abstract}
We follow our study of non-K\"{a}hler complex structures on $\R^4$  that we defined in \cite{DKZ15}.
We prove that these complex surfaces do not admit any smooth complex compactification. Moreover, we give an explicit description of their meromorphic functions. We also prove that the Picard groups of these complex surfaces are uncountable, and give an explicit description of the canonical bundle. Finally, we show that any connected non-compact oriented 4-manifold admits complex structures without K\"{a}hler metrics.
\end{abstract}

\maketitle

\section{Introduction}

In \cite{DKZ15} we constructed a two-parameters family of pairwise not biholomorphic complex structures $J(\rho _1,\rho _2)$ on $\R^4$, where $1 < \rho_2 < \rho_1^{-1}$. It is remarkable that these complex surfaces cannot be tamed by any symplectic structure. In particular, they have no compatible K\"{a}hler metrics. In addition, there is a surjective map $f \colon \R^4 \to \CP^1$, which is holomorphic with respect to any $J(\rho _1,\rho _2)$.
We denote by $E(\rho _1,\rho _2)$ the complex surface $(\R^4, J(\rho _1,\rho _2))$.
By following \cite{DKZ15}, we give the following definition.

\begin{definition}\label{CE-type/def}
A complex manifold $M$ is said to be of \emph{Calabi-Eckmann type} if there is a holomorphic immersion $k \colon X \to M$, with $X$ a compact complex manifold of positive dimension, such that $k$ is null-homotopic as a continuous map. Such a compact complex immersed submanifold is said to be  {\sl homotopically trivial}.
\end{definition}

Notice that if $M$ is of Calabi-Eckmann type, then it cannot be tamed by a symplectic form, and hence $M$ cannot be endowed with a K\"ahler metric.

The motivation for this definition is that Calabi and Eckmann in \cite{CE53} constructed such complex structures on $\R^{2n}$, for $n \geq 3$, arising as open subsets of certain compact complex $n$-manifolds (which are diffeomorphic to the product of two odd-dimensional spheres).

On the other hand, the manifolds $E(\rho_1, \rho_2)$ are the first examples of Calabi-Eckmann type complex surfaces \cite{DKZ15}. However, as the following main theorem states, our surfaces cannot be realised as analytic subsets of any compact complex non-singular surface.

\begin{theorem}\label{compact}
$E(\rho _1,\rho _2)$ cannot be holomorphically embedded in any compact complex surface.
\end{theorem}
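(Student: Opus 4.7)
I argue by contradiction: suppose $E := E(\rho_1,\rho_2)$ admits a holomorphic embedding into a smooth compact complex surface $S$. Since both have complex dimension $2$, the embedding is open, and I identify $E$ with an open subset of $S$, with compact complement $K := S \setminus E$.

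The crucial topological input is that $E \cong \R^4$ is contractible, so $H_2(E;\Z) = 0$ and the inclusion-induced map $H_2(E;\Z) \to H_2(S;\Z)$ vanishes. Consequently, every compact complex curve $C \subset E$ is null-homologous in $S$; in particular $C \cdot C = 0$ and $C \cdot K_S = 0$. Applied to the null-homotopic compact complex curve $C_0 = k(X) \subset E$ supplied by the Calabi-Eckmann hypothesis, the adjunction formula forces the arithmetic genus of $C_0$ to equal one. Moreover, $S$ itself is then of Calabi-Eckmann type, hence admits no symplectic taming form and is non-K\"ahler, so its algebraic dimension satisfies $a(S) \le 1$.

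I then exploit the holomorphic surjection $f : E \to \CP^1$: one tries to extend it meromorphically across $K$ to a meromorphic function $\bar f : S \dashrightarrow \CP^1$, and after resolving the indeterminacies by a finite sequence of blow-ups obtain a holomorphic elliptic fibration $\hat f : \hat S \to \CP^1$ on a compact complex surface $\hat S$ containing $E$ as an open subset with $\hat f|_E = f$. Since every fiber of $\hat f$ entirely contained in $E$ is null-homologous in $\hat S$ by the argument above, all (generic) fibers of $\hat f$ are null-homologous, which severely constrains $\hat S$ (for instance, forcing the pull-back of the hyperplane class from $\CP^1$ to vanish in $H^2(\hat S;\Q)$). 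Combined with the surjectivity of $f$ onto $\CP^1$ and the contractibility of $E$, one derives the desired contradiction from a Mayer-Vietoris or Alexander-duality analysis of the pair $(\hat S, E)$.

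The main obstacle is the meromorphic extension of $f$ across the compact but a priori wild set $K$; standard Hartogs-type theorems require $K$ to be analytic or of small codimension, hypotheses not available here. A more direct alternative is to bypass the extension entirely and work with the uncountable Picard group of $E$ and the explicit description of its meromorphic functions (both developed later in the paper), combined with the Enriques-Kodaira classification of non-K\"ahler compact complex surfaces of algebraic dimension at most one, to rule out each possible candidate for $S$ individually.
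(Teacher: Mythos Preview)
Your proposal identifies the right starting point (non-K\"ahlerness of $S$, hence $a(S)\le 1$) but then gets stuck at exactly the step you flag: extending $f$ meromorphically across $K=S\setminus E$. This is a real gap, not a technicality. The set $K$ is merely closed, with no control on its size or structure, so no Hartogs/Levi-type theorem applies. Your fallback suggestion --- invoking the later computation of $\M(E)$ and $\mathrm{Pic}(E)$ --- is also problematic: those results are proved in the paper \emph{using} the same fiberwise-constancy argument that drives the present theorem, so appealing to them here is close to circular, and in any case you do not indicate how knowing $\M(E)$ or $\mathrm{Pic}(E)$ would actually exclude a given compact $S$.

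The idea you are missing is that you never need to extend $f$. Once $a(S)\le 1$, first dispose of $a(S)=0$: such a surface has only finitely many compact curves, whereas $E$ already contains an uncountable family (the elliptic fibers of $f$). So $a(S)=1$, and then $S$ \emph{itself} carries an elliptic fibration $g:S\to\Sigma$ by classical theory. Now the null-homotopy of each compact fiber $F$ of $f$ in $E$ (hence in $S$) forces $g|_{i(F)}$ to be a null-homotopic holomorphic map from a compact Riemann surface to $\Sigma$, hence constant. Thus $i$ sends compact fibers of $f$ into fibers of $g$; analyticity of the vertical derivative then propagates this to \emph{all} fibers, so $i$ is fiber-preserving and $\Sigma=\CP^1$. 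At this point $f$ is literally the restriction of $g$, with no extension argument needed, and one finishes by running through the short list of non-K\"ahler elliptic surfaces over $\CP^1$ (class VI is excluded by $b_1$, minimal class VII by the absence of the required nodal fiber, non-minimal class VII because a $(-1)$-curve would meet the null-homologous elliptic fibers positively).
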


For a complex manifold $X$, let $\M(X)$ denote the field of meromorphic functions of $X$. Moreover, we denote by $\mathrm{Pic}(X)$ the Picard group of $X$, and by $\omega_X$ the canonical line bundle of $X$.

Let $\lb(k)$ be the holomorphic line bundle on $\CP^1$ with first Chern number $k \in \Z$, and consider the induced holomorphic line bundle $L_k=f^{\ast }(\lb(k))$ on $E(\rho _1,\rho _2)$. By abusing notation, we denote by $f^*$ also the pullback homomorphisms determined by $f$ between the groups that we consider in the next theorem. This shall not be misleading, being the exact meaning of the symbol $f^*$ clear from the context.

\begin{theorem}\label{meromorphic}
The following properties hold:
\begin{enumerate}
\item $f^{\ast } \colon   \M(\CP^1)\to  \M(E(\rho _1, \rho _2))$ is an isomorphism.

\item $f^{\ast } \colon  \mathrm{Pic}(\CP^1) \cong \Z \to \mathrm{Pic}(E(\rho _1, \rho _2))$ is injective and maps $\lb(-2)$ to $\omega_{E(\rho _1,\rho _2)}$, namely $\omega_{E(\rho _1,\rho _2)} = L_{-2}$.

\item $\mathrm{Pic}(E(\rho _1, \rho _2))$ is uncountable.
\end{enumerate}
\end{theorem}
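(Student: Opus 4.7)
For (1), injectivity of $f^*$ is immediate from the surjectivity of $f$. For surjectivity, given $h \in \M(E(\rho_1,\rho_2))$, the plan is to show that $h$ is constant on every homotopically trivial compact holomorphic curve $C \subset E$. Indeed, $h|_C \colon C \to \CP^1$ is holomorphic from a compact Riemann surface and null-homotopic (since $C$ is contractible in $E\simeq\R^4$); but every non-constant holomorphic map from a compact Riemann surface to $\CP^1$ is a branched cover of strictly positive topological degree, a contradiction. From the construction in \cite{DKZ15}, these compact curves occur as fibres of $f$ over an analytically dense set of base points, so $h$ is constant along those fibres and descends to a meromorphic function $\tilde h$ on $\CP^1$; an explicit local holomorphic section of $f$ provides the meromorphic continuation of $\tilde h$ to all of $\CP^1$, and one concludes $h = f^*\tilde h$.

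For (2), the injectivity of $f^*$ on Picard groups follows from (1): any holomorphic function on $E$ is a pullback from $\CP^1$ by (1), hence constant, so $H^0(E,\mathcal{O}_E)=\C$. If $f^*\lb(k)\cong\mathcal{O}_E$, one may assume $k\geq 0$ after dualising, and then the pullback injection $H^0(\CP^1,\lb(k))\hookrightarrow H^0(E,\mathcal{O}_E)=\C$ forces $k+1\leq 1$, hence $k=0$. For the canonical bundle, I would use the adjunction identity $\omega_E = f^*\omega_{\CP^1}\otimes\omega_{E/\CP^1}=L_{-2}\otimes\omega_{E/\CP^1}$ and reduce to the triviality of the relative dualising sheaf $\omega_{E/\CP^1}$: since the fibres of $f$ are affine curves (hence parallelisable), $\omega_{E/\CP^1}$ is fibrewise trivial, and a global nowhere-vanishing section would be patched from local relative volume forms $dw$ in fibre-adapted coordinates $(\zeta,w)$ arising from the explicit charts of $E(\rho_1,\rho_2)$ in \cite{DKZ15}.

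For (3), contractibility of $E\simeq\R^4$ forces $H^1(E,\Z)=H^2(E,\Z)=0$, so the exponential sequence collapses to an isomorphism of abelian groups $\mathrm{Pic}(E)\cong H^1(E,\mathcal{O}_E)$. By (2), the line bundle $f^*\lb(1)$ is non-trivial in $\mathrm{Pic}(E)$, so $H^1(E,\mathcal{O}_E)$ is a non-zero $\C$-vector space and therefore has cardinality at least $|\C|$. Under the isomorphism this cardinality transfers to $\mathrm{Pic}(E)$, which is hence uncountable.

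The most delicate step is (1), in particular verifying that the compact homotopically trivial curves cover enough fibres of $f$ to force $h$ to descend, a point that relies on the detailed structure of $E(\rho_1,\rho_2)$ from \cite{DKZ15}. Once (1) is in place, (2) reduces to a straightforward sheaf-theoretic argument combined with the explicit construction of a nowhere-vanishing relative $2$-form, and (3) is then an immediate consequence via the collapsed exponential sequence.
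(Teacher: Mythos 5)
Your overall strategy matches the paper's for parts (1) and (3): fibrewise constancy of meromorphic functions via the degree-zero argument on homotopically trivial compact fibres, followed by descent to $\CP^1$, and then the exponential sequence on the contractible space $E(\rho_1,\rho_2)$. Where you genuinely diverge is the injectivity of $f^*$ on Picard groups: you count sections, using $H^0(E,\mathcal{O}_E)=\C$ (which does follow from (1)) together with $h^0(\CP^1,\lb(k))=k+1$ to force $k=0$. The paper instead takes the decomposition $E=W_1\cup_j W_2$, writes a hypothetical global trivialisation of $L_k$ as $\tau_i f^*(\sigma_i)$ over $W_i$, shows the $\tau_i$ are fibrewise constant, and pushes them down to build a nowhere-vanishing global section of $\lb(k)$ on $\CP^1$. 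Both arguments are correct and rest on the same fibrewise-constancy mechanism; yours is shorter once (1) is in hand.

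There are two soft spots. First, in (1) you restrict $h$ to \emph{every} homotopically trivial compact curve, but a meromorphic function need not restrict to a well-defined holomorphic map on a curve meeting its indeterminacy locus, and if one extends $h$ over the punctures the resulting map is no longer literally $h\circ\iota$, so null-homotopy of the inclusion does not transfer automatically. The paper's fix is to note that the indeterminacy set $N=(h)_0\cap(h)_\infty$ is finite, so all but finitely many compact fibres lie in $E(\rho_1,\rho_2)-N$ and are still homotopically trivial there; you should add this reduction (it costs nothing, since an open disk's worth of fibres already suffices for the identity-theorem step). Second, in (2) your justification for the fibrewise triviality of $\omega_{E/\CP^1}$ --- ``the fibres of $f$ are affine curves'' --- is false: the fibres over $\Delta(\rho_1)$ are compact elliptic curves plus one nodal fibre, which is the whole point of the construction. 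The conclusion survives because elliptic and nodal curves also have trivial dualising sheaf, but the real content is the global patching, which you only gesture at: one must check that the relative form $\mathrm{d}z/z$ on the elliptic part matches the one on the product part under the gluing $j$, and that it extends across the node of the singular fibre. The paper does exactly this with the explicit $2$-form $\sigma=\mathrm{d}z\wedge\mathrm{d}w/z$, invoking Hartogs at the node and reading off the polar divisor $2F_2$ to conclude $\omega_{E(\rho_1,\rho_2)}=L_{-2}$. Your adjunction formulation is equivalent in principle, but as written this step is a plan rather than a proof.
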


\begin{theorem}\label{R^{2n}}
The total space of a holomorphic vector bundle of rank $n$ over $E(\rho_1, \rho_2)$ determines a Calabi-Eckmann type complex structure on $\R ^{2n+4}$. Moreover, the total spaces of the Whitney sums $L_{k_1} \oplus L_{k_2}\oplus \cdots \oplus L_{k_n}$, with $k_1\leq k_2\leq \cdots \leq k_n$, are pairwise not biholomorphic to each other for all choices of the parameters $1 < \rho_2 < \rho_1^{-1}$ and $(k_1, k_2, \cdots ,k_n) \in \Z^n$.
\end{theorem}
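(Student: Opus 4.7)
My plan splits the theorem into two parts.

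\emph{Smooth structure and Calabi-Eckmann property.} Since $E(\rho_1,\rho_2)\cong\R^4$ is contractible, every smooth real vector bundle over it is trivial, so the total space $V$ of any rank-$n$ holomorphic vector bundle over $E(\rho_1,\rho_2)$ is diffeomorphic to $\R^4\times\R^{2n}=\R^{2n+4}$ and itself contractible. For the Calabi-Eckmann property, compose a null-homotopic holomorphic immersion $k\colon T\to E(\rho_1,\rho_2)$ from a compact complex manifold (provided by Definition~\ref{CE-type/def}) with the zero section $E(\rho_1,\rho_2)\hookrightarrow V$; the result is a holomorphic immersion of $T$ into $V$ that is null-homotopic since $V$ is contractible.

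\emph{Distinguishing the Whitney sums.} Write $V_{\vec k}(\rho)$ for the total space of $L_{k_1}\oplus\cdots\oplus L_{k_n}$ over $E(\rho_1,\rho_2)$. Using $L_{k_i}=f^*\lb(k_i)$, there is a fibre product description $V_{\vec k}(\rho)=E(\rho_1,\rho_2)\times_{\CP^1}\tilde V_{\vec k}$, where $\tilde V_{\vec k}$ denotes the total space of $\bigoplus_i\lb(k_i)$ over $\CP^1$, together with a canonical holomorphic surjection $q\colon V_{\vec k}(\rho)\to\tilde V_{\vec k}$ whose fibres are the fibres of $f$. The strategy is to distinguish the $V_{\vec k}(\rho)$ via the Douady space of compact complex curves in them: I plan to prove that the positive-dimensional compact complex subvarieties of $V_{\vec k}(\rho)$ are exactly the fibres of $q$, so that the Douady space is canonically biholomorphic to $\tilde V_{\vec k}$.

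Any biholomorphism $F\colon V_{\vec k}(\rho)\to V_{\vec k'}(\rho')$ would then induce a biholomorphism $\tilde V_{\vec k}\cong\tilde V_{\vec k'}$, and the right-hand side recovers the vector bundle: $\sum_i k_i$ is read from $\omega_{\tilde V_{\vec k}}=\pi_{\CP^1}^*\lb(-2-\sum_i k_i)$ (computed directly from $\omega_{\CP^1}=\lb(-2)$ and the relative tangent sequence, analogously to Theorem~\ref{meromorphic}(2)), and the individual $k_i$ are recovered from the Grothendieck splitting of the degree-$1$ part of the graded direct image $(\pi_{\CP^1})_*\mathcal O_{\tilde V_{\vec k}}$, which equals $\bigoplus_i\lb(-k_i)$. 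For the parameters $(\rho_1,\rho_2)$, $F$ sends an elliptic $q$-fibre biholomorphically to an elliptic $q'$-fibre, and the family of these fibres recovers the elliptic fibration $f\colon E(\rho_1,\rho_2)\to\CP^1$; matching their moduli along $\CP^1$ reduces the statement to the biholomorphism classification of $E(\rho_1,\rho_2)$ from~\cite{DKZ15}.

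The main obstacle is proving that the positive-dimensional compact complex subvarieties of $V_{\vec k}(\rho)$ are precisely the $q$-fibres. This entails ruling out compact curves that surject onto a section of $\tilde V_{\vec k}\to\CP^1$, which reduces to showing that $E(\rho_1,\rho_2)$ contains no compact complex curve other than the fibres of $f$---a property that I expect to follow from the explicit description of $J(\rho_1,\rho_2)$ in~\cite{DKZ15}, but which requires careful verification.
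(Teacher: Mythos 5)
Your first paragraph (contractibility, triviality of the underlying smooth bundle, and composing a homotopically trivial curve with the zero section) is fine and agrees with the paper. The gap is in the second part, in the claim that the positive-dimensional compact complex subvarieties of $V_{\vec k}(\rho)$ are \emph{all} the fibres of $q$, so that the Douady space is $\tilde V_{\vec k}$. Only the \emph{compact} fibres of $q$ qualify, i.e.\ those of the form $F_p\times\{y\}$ with $p$ in the disk $\Delta_1\subset\CP^1$ over which $f$ has compact elliptic fibres; over the complementary disk the fibres of $f$ are open annuli, so the corresponding $q$-fibres are non-compact. Hence the space of compact curves you construct is $\xi_{\vec k}^{-1}(\Delta_1)$, not $\tilde V_{\vec k}$. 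Since every holomorphic vector bundle over a disk is holomorphically trivial, this space is biholomorphic to $\Delta_1\times\C^n$ for \emph{every} $\vec k$, and therefore carries no information about the $k_i$: neither the computation $\omega_{\tilde V_{\vec k}}=\pi^*\lb(-2-\sum_i k_i)$ nor the direct-image argument $(\pi_{\CP^1})_*\mathcal O_{\tilde V_{\vec k}}\supset\bigoplus_i\lb(-k_i)$ can be run on the restriction to a disk. The strategy of reading the bundle off the Douady space therefore cannot work as stated. (Your ``main obstacle'', by contrast, is not really one: the classification of compact curves in $E(\rho_1,\rho_2)$ is Proposition 5 of \cite{DKZ15}, and curves surjecting onto $\CP^1$ are excluded because a null-homotopic holomorphic map from a compact Riemann surface to $\CP^1$ has degree zero, hence is constant.)

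The paper closes exactly this gap by exploiting the biholomorphism $\Phi$ itself rather than only its action on the space of compact curves. From the classification of compact curves one deduces that $\Phi$ respects the fibre-product structure over the elliptic region, and then, by analytic continuation (the vertical derivative vanishes on an open set, hence everywhere), that $\Phi=(\varphi,\psi)$ \emph{globally}, where $\psi\colon E(\xi_k)\to E(\xi_{k'})$ is a fibrewise biholomorphism of the full total spaces over all of $\CP^1$. Taking the fibre derivative of $\psi$ along the zero section linearizes it to a holomorphic bundle isomorphism $\xi_k\cong\xi_{k'}$, and Birkhoff--Grothendieck then gives $k=k'$. Your recovery of $(\rho_1,\rho_2)$ by matching the moduli of the elliptic fibres and invoking the main theorem of \cite{DKZ15} is essentially the paper's argument and is fine; it is only the recovery of $\vec k$ that needs the global splitting of $\Phi$ rather than the Douady space.
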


\begin{remark}
The complex structures on $\R^{2n}$ that arise as in the statement of Theorem~\ref{R^{2n}} are not biholomorphic to those constructed by Calabi and Eckmann in \cite{CE53}. Indeed, by our construction, $\R^{2n}$ admits an immersed rational curve with one node, while the only compact curves in the Calabi and Eckmann examples are smooth elliptic curves.
\end{remark}

Finally, we give a general result for Calabi-Eckmann type complex surfaces.

\begin{theorem}\label{noncpt/thm}
Any non-compact connected oriented smooth 4-manifold $M$ admits a complex structure of Calabi-Eckmann type, which is compatible with the given orientation.
\end{theorem}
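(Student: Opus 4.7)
The plan is to implant the null-homotopic immersed nodal rational curve of $E(\rho_1,\rho_2)$ into a coordinate chart of $M$, extend the induced local complex structure to an almost complex structure on $M$, and then integrate it via a relative form of Gromov's $h$-principle. First I would fix a smooth orientation-preserving open embedding $\iota \colon E(\rho_1,\rho_2) \hookrightarrow M$ onto an open subset $U \subset M$; such an embedding exists because $E(\rho_1,\rho_2) \cong \R^4$ and any smooth 4-manifold contains Euclidean charts. Pushing the complex structure $J(\rho_1,\rho_2)$ forward via $\iota$ gives an integrable almost complex structure $J_U$ on $U$. The immersed nodal rational curve $k_0 \colon \CP^1 \to E(\rho_1,\rho_2)$ from the remark after Theorem~\ref{R^{2n}} transports to $k := \iota \circ k_0 \colon \CP^1 \to M$, a $J_U$-holomorphic immersion that is null-homotopic in $M$ since its image lies in the contractible open set $U$. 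Fix a compact neighborhood $K \subset U$ of the image of $k$.

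Next, I would extend $J_U|_K$ to an almost complex structure $J$ on $M$ compatible with the orientation. Standard obstruction theory provides an almost complex structure $J^M$ on $M$: the obstructions to sections of the $\SO(4)/\U(2) \cong S^2$-bundle associated with $TM$ lie in $H^p(M;\Z)$ for $p = 3, 4$; the group $H^4(M;\Z)$ vanishes because $M$ is a connected non-compact 4-manifold, and the $H^3$-obstruction vanishes because every oriented 4-manifold is $\spinc$. Since $U$ is contractible, $J_U$ and $J^M|_U$ are homotopic through a family $\{J_t\}_{t \in [0,1]}$ of almost complex structures on $U$ (with $J_0 = J^M|_U$ and $J_1 = J_U$); using a bump function $\phi \colon U \to [0,1]$ equal to $1$ on $K$ and compactly supported in $U$, define $J(p) := J_{\phi(p)}(p)$ for $p \in U$ and $J := J^M$ outside $U$. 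Then $J$ is a smooth almost complex structure on $M$, compatible with the orientation, with $J|_K = J_U|_K$.

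Finally, I would invoke the relative form of Gromov's $h$-principle for integrable almost complex structures on open manifolds: since $M$ is open and $J$ is already integrable in a neighborhood of $K$, the structure $J$ can be homotoped through almost complex structures -- fixed on a neighborhood of $K$ -- to an integrable complex structure $J'$ on $M$. Then $k \colon \CP^1 \to (M, J')$ remains holomorphic (its image lies in $K$) and is still null-homotopic, so $(M, J')$ is of Calabi-Eckmann type and compatible with the prescribed orientation.

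The main obstacle is precisely this last step. Gromov's $h$-principle for complex structures on open manifolds is classical in its absolute form, but the relative version -- keeping the structure fixed on a closed subset where it is already integrable -- requires the deformation argument to be adapted. Since $M \setminus K$ remains an open manifold the adaptation should succeed, but the bookkeeping needed to preserve integrability along the homotopy near $\Bd K$ is the delicate ingredient demanding careful attention; an alternative would be to carry out the construction cell-by-cell using a handle decomposition of $M$ without $4$-handles, extending the complex structure across each new handle by a direct geometric argument.
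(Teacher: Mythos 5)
Your construction up to the point of producing an orientation-compatible almost complex structure $J$ on $M$ that is integrable on a neighborhood of $K$ is fine (and the obstruction-theoretic part matches what the paper does via Teichner--Vogt and Gompf). The gap is the final step, and it is not a matter of ``bookkeeping'': there is no classical Gromov $h$-principle, relative or absolute, for \emph{integrable} complex structures on open manifolds. Gromov's theorem for open manifolds applies to \emph{open}, Diff-invariant differential relations, whereas integrability is the vanishing of the Nijenhuis tensor --- a closed, non-open condition on the $1$-jet of $J$. The correct framework for such structures is Haefliger's theory of $\Gamma$-structures, where the existence question reduces to lifting the classifying map of $TM$ through $B\Gamma_2^{\C}\to \BU(2)$, and the required connectivity of the homotopy fiber is not a classical fact one can simply invoke; a relative version rel a closed set where $J$ is already integrable is further still from being available. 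Your fallback (extending an integrable structure handle by handle) runs into the same problem: there is no direct geometric argument for extending an integrable complex structure across a handle in dimension four. So the proposal as written does not close.

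The paper sidesteps integrability entirely by using the $h$-principle that \emph{is} classical, namely Phillips' theorem on immersions of open manifolds. Since $M$ is open it carries a nowhere zero vector field, so $TM=\xi\oplus\varepsilon^1$ as complex bundles with $\xi$ pulled back from $\CP^1\subset\CP^\infty$; as $\pi_1(\SO(4))=\Z_2$, $TM$ is then a pullback of the unique nontrivial oriented rank-four bundle over $S^2$, which is $T(\overline{\CP}{}^2-\{\pt\})|_{\CP^1}$. This yields a real bundle monomorphism $TM\to T(\overline{\CP}{}^2-\{\pt\})$, hence by Phillips an orientation-preserving immersion $g\colon M\to\overline{\CP}{}^2-\{\pt\}$. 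Giving the target the Calabi--Eckmann type structure $P(\rho_1,\rho_2)$ (the blow-up of $E(\rho_1,\rho_2)$ at a point) and pulling back along $g$ produces an honest integrable structure on $M$ with no PDE to solve; one then isotopes $g$ so that a ball on which $g$ embeds contains a holomorphic torus, which gives the homotopically trivial compact curve. If you want to salvage your approach, you would need to replace the appeal to an $h$-principle for integrability by an argument of this pullback type.
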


%General results about compact holomorphic submanifolds of (possibly not integrable) almost complex structures on $\R^{2n}$ have been obtained in our previous article \cite{DKZ16}.

The paper is organized as follows. In Section \ref{DKZ} we recall the construction of the complex manifolds $E(\rho _1,\rho _2)$ as it is given in \cite{DKZ15}, and we also fix the notations. In Section \ref{Classes}, we recall some classification results for compact complex surfaces that are needed in the proof of Theorem~\ref{compact}. In Section \ref{proofs} we prove Theorems~\ref{compact} and \ref{meromorphic}. Finally, in Section \ref{2n} we prove Theorems \ref{R^{2n}} and \ref{noncpt/thm}.

\section{Non-K\"{a}hler complex structures on $\R^4$}\label{DKZ}

Consider the complex annulus
$\Delta(r_0, r_1) = \{z \in \C \mid r_0 < |z| < r_1 \}$ and the disk $\Delta(r) = \{z \in \C \mid \allowbreak |z| < r \}$. Fix positive numbers $\rho_0$, $\rho_1$ and $\rho_2$ such that $1 < \rho_2 < \rho_1^{-1} < \rho_0^{-1}$.

Let $f_1 \colon W_1 \to \Delta (\rho _1)$ be a relatively minimal elliptic holomorphic Lefschetz fibration with one singular fiber. In other words, $W_1$ is a fibered tubular neighborhood of a singular fiber of type $I_1$ in an elliptic complex surface by Kodaira results \cite{Kod60, Kod63b}.

Following Kodaira \cite{Kod63}, we can represent $f_1$ as follows.
First, we consider the quotient $W_1' = (\C^{\ast }\times \Delta (0, \rho _1))\slash \Z$, where the action is given by $n\cdot (z, w)=(zw^n, w)$.
The projection on the second factor $W'_1 \to \Delta (0, \rho _1)$ is a holomorphic elliptic fibration that is equivalent to $f_1$ restricted over $\Delta (0, \rho _1)$. Therefore, we can consider $W_1'$ as an open subset of $W_1$ (precisely, as the complement of the singular fiber).

Let $W_2 = \Delta (1, \rho _2) \times \Delta (\rho _0^{-1})$ be endowed with the product complex structure, and let $f_2 \colon W_2 \to \Delta (\rho _0^{-1})$ be the canonical projection.
Consider also the open subset $V_2 = \Delta (1, \rho _2) \times \Delta (\rho _1^{-1}, \rho _0^{-1})\subset W_2$.

Now, consider the multi-valued holomorphic function $\varphi  \colon \Delta (\rho _0, \rho _1)\to \C ^{\ast }$ defined by
\begin{eqnarray*}
\varphi(w)=\exp \left(\frac{1}{4\pi i}(\log w)^2 - \frac{1}{2} \log w \right).
\end{eqnarray*}

As it can be easily checked, the image of the map $\Phi \colon \Delta(1, \rho_2) \times \Delta(\rho_0, \rho_1) \to \C^* \times \Delta(0, \rho_1)$ defined by $\Phi(z,w) = (z \varphi(w), w)$, is invariant under the above $\Z$-action. Therefore, the composition of $\Phi$ with the projection map $\pi \colon \C^* \times \Delta(0, \rho_1) \to W_1' \subset W_1$ determined by the $\Z$-action, is a single-valued holomorphic embedding, and let $V_1 \subset W_1$ be the image of $\pi \circ \Phi$. Notice that $\pi \circ \Phi$ gives an identification of $V_1$ with the product $\Delta(1, \rho_2) \times \Delta(\rho_0, \rho_1)$.

We are now ready to glue $W_1$ and $W_2$ together by identifying $V_1$ and $V_2$ via the biholomorphism $j \colon V_1 = \Delta(1, \rho_2) \times \Delta(\rho_0, \rho_1) \to V_2$ defined by $j(z, w) = (z, w^{-1})$. We obtain the complex manifold $E(\rho _1,\rho _2) = W_1 \cup_j W_2$. The result does not depend on the parameter $\rho_0$ up to biholomorphisms, since $\rho_0$ determines only the size of the gluing region.

Moreover, we can define a surjective holomorphic map $f \colon E(\rho _1,\rho _2)\to \CP^1$
by putting $f(x) = f_1(x)$ if $x \in W_1$ and $f(x) = f_2(x)$ if $x \in W_2$, where $\CP^1$ is regarded as the Riemann sphere resulting from the gluing $\Delta(\rho_1) \cup_h \Delta(\rho_0^{-1})$, with $h \colon \Delta(\rho_0, \rho_1) \to \Delta(\rho_1^{-1}, \rho_0^{-1})$ defined by $h(w) = w^{-1}$.

It can be proved that $E(\rho _1,\rho _2)$ is diffeomorphic to $\R^4$ essentially by means of the Kirby calculus applied to Lefschetz fibrations, see Gompf and Kirby \cite{GS99} and Apostolakis, Piergallini and Zuddas \cite{APZ2013}. In \cite{DKZ15} this has been done by relating the decomposition of $E(\rho_1, \rho_2)$ given in its definition with a decomposition of $\R^4 \subset S^4$ determined by the Matsumoto-Fukaya fibration, which is a genus-one achiral Lefschetz fibration $S^4\to S^2$ \cite{Mat82}. Actually, the map $f$ above coincides with the restriction of this fibration on $\R^4 \subset S^4$.

It follows that the fibers of $W_1$ have null-homotopic inclusion map in the contractible space $E(\rho _1,\rho _2)$, namely they are homotopically trivial. Then, the complex manifold $E(\rho _1,\allowbreak\rho _2) \cong \R^4$ is of Calabi-Eckmann type by Definition \ref{CE-type/def}.

Moreover, the only compact holomorphic curves in $E(\rho_1, \rho_2)$ are the compact fibers of $f$, see \cite[Proposition 5]{DKZ15}.

\section{The classification of compact complex surfaces}\label{Classes}

According to Kodaira \cite[Theorem 55]{Kod68} (see also \cite{Kod60, Kod68a}), compact complex surfaces are classified into the following seven classes, up to blow ups and blow downs.
\begin{enumerate}
\item[(I)] $\CP^2$ or $\CP^1$-bundles over a compact complex curve;
\item[(II)] $K3$ surfaces;
\item[(III)] complex tori;
\item[(IV)] K\"{a}hler elliptic surfaces;
\item[(V)] algebraic surfaces of general type;
\item[(VI)] elliptic surfaces whose first Betti number $b_1$ is odd and greater than $1$;
\item[(VII)] surfaces with $b_1=1$.
\end{enumerate}

Surfaces of class I to V are K\"{a}hler, while surfaces of classes VI and VII are non-K\"{a}hler, because of the following theorem.

\begin{theorem}[Miyaoka \cite{Mi74} and Siu \cite{Si83}]
A compact complex surface is K\"{a}hler if and only if the first Betti number is even.
\end{theorem}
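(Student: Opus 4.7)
The plan is to split into the two directions. The forward implication (Kähler implies $b_1$ even) follows from Hodge theory: on any compact Kähler manifold the Hodge decomposition gives
\[
H^1(X, \C) = H^{1,0}(X) \oplus H^{0,1}(X)
\]
with $H^{0,1}(X) = \overline{H^{1,0}(X)}$, so $b_1(X) = 2 \dim_{\C} H^{1,0}(X)$ is even. No input beyond the standard Kähler identities and harmonic representatives of $\overline{\partial}$-cohomology is needed for this direction.

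For the converse I would argue case by case along the Enriques--Kodaira classification recalled in this section. Since $b_1$ is preserved by blow-ups and blow-downs, and since the Kähler property is preserved by both operations on surfaces, it suffices to verify Kählerness on a minimal representative in each class. Classes VI and VII are excluded by the parity hypothesis on $b_1$. Among the remaining classes, $\CP^2$ carries the Fubini--Study form, and any $\CP^1$-bundle over a compact Riemann surface admits a Kähler form built by combining a Kähler form pulled back from the base with a vertical contribution coming from a Hermitian metric on the underlying rank-two vector bundle (class I). Complex tori (class III) inherit translation-invariant Kähler metrics from $\C^2$. Kähler elliptic surfaces (class IV) are Kähler by definition, and algebraic surfaces of general type (class V) are projective, hence Kähler.

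The only substantive case, and the main obstacle, is class II: K3 surfaces. Algebraic K3's are visibly Kähler, but they form only a countable union of $19$-dimensional subvarieties inside the $20$-dimensional moduli space of K3 surfaces, so one must propagate the Kähler property across non-algebraic deformations. This is the content of Siu's theorem. The approach combines Yau's solution of the Calabi conjecture on algebraic K3's (producing Ricci-flat Kähler metrics with controlled behaviour in families) with the global Torelli theorem for K3 surfaces. Concretely, one approximates an arbitrary K3 surface by algebraic ones in moduli, takes a limit of suitably normalised Ricci-flat Kähler forms on the approximating members, and uses plurisubharmonic regularisation together with volume estimates for positive closed $(1,1)$-currents to ensure that the limiting form is a genuine Kähler metric rather than a degenerate current supported on a proper analytic subset. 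Carrying out this limiting procedure and ruling out concentration of mass is the hardest step of the whole theorem; the earlier classes fit together cleanly once the K3 case is settled.
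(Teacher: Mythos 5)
The paper offers no proof of this statement: it is quoted as a known theorem, with the two directions attributed to Hodge theory plus the work of Miyaoka and Siu, so there is no internal argument to compare yours against. On its own terms, your forward direction is complete and correct (the Hodge decomposition $H^1(X,\C)=H^{1,0}\oplus\overline{H^{1,0}}$ forces $b_1=2h^{1,0}$), your reduction to minimal models is sound (only the blow-up direction, that a blow-up of a compact Kähler surface is Kähler, is actually needed), and your identification of the K3 case as the place where Siu's theorem enters is right; your sketch of that argument is an acceptable pointer to the literature.

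The genuine gap is your treatment of class IV. You dispose of it with ``Kähler elliptic surfaces are Kähler by definition,'' but this begs the question. In Kodaira's classification the relevant class is singled out by numerical and structural invariants --- elliptic surfaces with $b_1$ even --- and the assertion that every such surface actually carries a Kähler metric is precisely Miyaoka's theorem, which is the second essential external input alongside Siu's result, not a tautology. The point is that a properly elliptic surface with even $b_1$ and algebraic dimension $1$ need not be projective, so the fallback ``algebraic hence Kähler'' that you use for class V is unavailable here, and a construction of the Kähler metric is genuinely required. As written, your case analysis silently assumes the conclusion for exactly the class whose difficulty motivated Miyaoka's paper. If you want to avoid leaning on the Enriques--Kodaira classification altogether, the later classification-free proofs of Buchdahl and Lamari, via intrinsic characterizations of Kähler classes on surfaces, give the cleanest route to the full equivalence.
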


We recall also the following theorem.

\begin{theorem}[Chow and Kodaira \cite{CK52} and Kodaira \cite{Kod60}]
A compact complex surface is algebraic if the algebraic dimension is two, and it is elliptic if the algebraic dimension is one.
\end{theorem}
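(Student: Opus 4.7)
The plan is to treat the two implications separately, in both cases by using the available meromorphic functions to build a meromorphic map from $X$ to a projective variety and then analyzing the resulting structure after resolving indeterminacy by a sequence of point blow-ups $\sigma\colon \tilde X\to X$.

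For the first implication, I would assume $a(X)=2$ and pick two algebraically independent functions $f_1,f_2\in\M(X)$. Consider the meromorphic map $\Phi=(f_1,f_2)\colon X\dashrightarrow \CP^1\times\CP^1$, resolve its points of indeterminacy to obtain a holomorphic map $\tilde\Phi\colon \tilde X\to \CP^1\times\CP^1$, and note that $\tilde\Phi$ is surjective and generically finite because $f_1,f_2$ are algebraically independent. The key step is then to produce an ample divisor on $\tilde X$: starting from the pull-back $D=\tilde\Phi^{\ast}H$ of a very ample divisor $H$ on $\CP^1\times\CP^1$, which is positive on every irreducible curve not contracted by $\tilde\Phi$, one modifies $D$ by subtracting suitable positive multiples of the finitely many curves contracted by $\tilde\Phi$ to arrange positive self-intersection with each of them. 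The resulting divisor is positive on every irreducible curve and has positive self-intersection, hence by Nakai--Moishezon it is ample, and by Kodaira's embedding theorem $\tilde X$ is projective. Since $X$ is obtained from $\tilde X$ by blowing down exceptional $(-1)$-curves, $X$ is projective algebraic as well.

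For the second implication, assume $a(X)=1$ and choose a non-constant $f\in\M(X)$. After resolving indeterminacy we obtain a surjective holomorphic map $\tilde f\colon \tilde X\to\CP^1$, and Stein factorization yields $\tilde f=g\circ h$ with $h\colon\tilde X\to B$ of connected fibers onto a compact Riemann surface $B$ and $g\colon B\to\CP^1$ finite. It remains to prove that the smooth fibers of $h$ have genus one. If a generic fiber were rational, the surface $\tilde X$ would be birationally ruled over $B$, hence $\tilde X$ would admit a second algebraically independent meromorphic function coming from the $\CP^1$-factor of a birational trivialization, forcing $a(\tilde X)=2$, a contradiction with $a(\tilde X)=a(X)=1$. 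If the generic fiber had genus $\geq 2$, then the relative dualizing sheaf $\omega_{\tilde X/B}$ would have positive degree on a generic fiber; for $k\gg 0$, global sections of $\omega_{\tilde X/B}^{\otimes k}$ combined with $h^{\ast}\M(B)$ would again produce two algebraically independent meromorphic functions, contradicting $a(\tilde X)=1$. Therefore the generic fibers are smooth elliptic curves. Finally, the exceptional curves introduced by $\sigma$ lie inside fibers of $\tilde f$, so after contracting them in the appropriate order the map $h$ descends to an elliptic fibration $X\to B$, proving that $X$ is elliptic.

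The main technical obstacles are, in the first case, showing that $\tilde X$ is genuinely projective and not merely Moishezon. This is special to dimension two and is the substance of Chow--Kodaira, because in dimension $\geq 3$ Moishezon does not imply projective, as Hironaka's examples show; the argument requires the explicit line-bundle correction via the contracted exceptional curves described above rather than a general principle. In the second case, the delicate point is ruling out generic fibers of genus $\geq 2$, which requires controlling the direct images $h_{\ast}\omega_{\tilde X/B}^{\otimes k}$ and verifying that the new meromorphic functions so constructed really augment the transcendence degree beyond $h^{\ast}\M(B)$.
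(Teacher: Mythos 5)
The paper offers no proof of this statement: it is recalled as a classical theorem, with references to \cite{CK52} and \cite{Kod60}, and is then used as a black box in the proof of Theorem~\ref{compact}. So there is nothing internal to compare your argument against; what follows is an assessment of your sketch on its own terms. The outline you give (algebraic reduction, resolution of indeterminacy, Nakai--Moishezon in the case $a(X)=2$, exclusion of fibre genus $0$ and $\geq 2$ in the case $a(X)=1$) is indeed the standard shape of the classical arguments, but it has genuine gaps at exactly the points where the theorem is hard, and one step is false.

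Concretely: (i) In the case $a(X)=2$, after setting $D=\tilde\Phi^{*}H$ you claim that subtracting suitable multiples of the contracted curves $E_1,\dots,E_r$ produces a divisor positive on \emph{every} irreducible curve. Positivity on the $E_j$ themselves is fine, since $(E_i\cdot E_j)$ is negative definite by the Hodge index theorem ($D^2>0$ and $D\cdot E_i=0$); but positivity on the remaining irreducible curves $C$ is not addressed, and it is the crux: $\bigl(\sum a_iE_i\bigr)\cdot C$ is not a priori bounded by a fixed multiple of $D\cdot C$, so $ND-\sum a_iE_i$ may be negative on curves with small $D$-degree meeting the contracted locus with high multiplicity. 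Bounding that multiplicity in terms of the $H$-degree of $\tilde\Phi(C)$ is the substance of the Chow--Kodaira projectivity criterion, and it is missing. (ii) In the case $a(X)=1$, your exclusion of genus $\geq 2$ fibres needs the direct images $h_{*}\omega_{\tilde X/B}^{\otimes k}$ to have enough sections not already accounted for by $h^{*}\M(B)$; this is a semipositivity statement (Fujita--Viehweg type) far deeper and later than the cited sources, and you do not verify that the resulting functions are algebraically independent of $h^{*}\M(B)$, so as written this is an appeal to a theorem you have not established. (iii) Your final descent step asserts that the exceptional curves of $\sigma$ lie inside fibres of $\tilde f$. That is wrong: by construction, the last exceptional curve over an indeterminacy point \emph{dominates} $\CP^1$ (consider the pencil of lines through a point of the projective plane), so contracting the exceptional locus does not obviously descend $h$ to a morphism on $X$. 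The correct observation is that the algebraic reduction has no indeterminacy points at all: if it did, the fibre class on $X$ would have positive self-intersection, which by the first part of the theorem would force $a(X)=2$, a contradiction; hence $f$ is already holomorphic and $\sigma$ is not needed.
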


Thus, the algebraic dimension of a non-K\"{a}hler surface is $0$ or $1$.
Hence it is $1$ for class VI.
On the other hand, class VII is divided into the two cases where the algebraic dimension is $0$ or $1$.
If it is $0$, then $X$ has at most finitely many compact holomorphic curves by Kodaira \cite{Kod60}.
Otherwise, $X$ is an elliptic surface.
Such elliptic surfaces are further classified according as if they are minimal or not minimal.
If $X$ is a minimal elliptic surface of class VII, then $X$ is obtained from the product $\CP^1\times E$ by a finite sequence of logarithmic transformations, where $E$ is a smooth elliptic curve \cite{Kod66}.
If $X$ is not minimal, then it is obtained from a minimal one by blow ups.

\section{The proofs of Theorems \ref{compact} and \ref{meromorphic}}\label{proofs}

\begin{proof}[Proof of Theorem \ref{compact}]
The proof is by contradiction.
So, suppose that $E(\rho_1,\rho_2)$ is embedded as a domain in a compact complex surface $X$, and let $i \colon  E(\rho_1,\rho_2)\to X$ be a holomorphic embedding.

Recall that $E(\rho_1,\rho_2)$ is not K\"ahler. Hence, $X$ cannot be K\"ahler,
and so it is of class VI or VII. Moreover, the algebraic dimension is $0$ or $1$.
If it is $0$, then $X$ contains at most finitely many compact holomorphic curves, while $E(\rho _1,\rho _2)$ does contain infinitely many, which is impossible.
Hence, the algebraic dimension must be $1$. Therefore, $X$ has an elliptic fibration $g \colon X \to \Sigma$ over a compact complex curve.

Notice that any compact fiber $F$ of $f \colon E(\rho_1, \rho_2) \to \CP^1$ is homotopically trivial in $E(\rho_1, \rho_2)$, because $E(\rho_1, \rho_2)$ is contractible. Then $(g \circ i)_{|F} \colon F \to \Sigma$ is holomorphic and null-homotopic as a continuous map, and so it is of zero degree, implying that $g \circ i$ is constant on $F$. Indeed, a holomorphic non-constant map between compact Riemann surfaces is open (hence surjective), and it has positive degree because it is orientation-preserving at the regular points.

Therefore, by compactness, $i(F)$ is a fiber of $g$. In other words, $i$ is fiber-preserving in the elliptic part of $E(\rho_1, \rho_2)$.

It follows that $g \circ i$ is constant on any fiber of $f$, even in the non-compact ones, because the vertical derivative of $g \circ i$, that is the tangent map restricted to the kernel of $Tf$, is null in the elliptic part of $E(\rho_1, \rho_2)$, which is an open subset of $E(\rho_1, \rho_2)$, and so, by analyticity, it must be null everywhere.
Therefore, $i \colon E(\rho _1,\rho _2)\to X$ is a fiberwise embedding.
This also implies that $\Sigma $ must be $\CP^1$, and we can regard $f \colon E(\rho _1,\rho _2)\to \CP^1$ as a restriction of the fibration $g \colon  X\to \CP^1$ by identifying $E(\rho _1,\rho _2)$ with its image $i(E(\rho _1,\rho _2)) \subset X$.

Now, we first consider the case when $X$ is of class VI.
In this case, $X$ is an elliptic surface over $\CP^1$ such that $b_1(X) \geq 3$.
Since $\pi_1(T^2)=\Z ^2$ and $\CP^1$ is simply connected, the first Betti number $b_1(X)$ must be smaller than $3$.
This is a contradiction.

Next, we consider $X$ of class VII.
Suppose that $X$ is a minimal elliptic surface.
Such surfaces are classified as in Section \ref{Classes}. There is an elliptic fibration $\pi \colon  X\to \CP^1$ having only finitely many multiple fibers that are smooth elliptic curves, and no singular fibers.

On the other hand, the fibration $f \colon E(\rho _1,\rho _2)\to \CP^1$ has a singular fiber which is an immersed sphere with one node, and by the above argument we have $f = \pi \circ i$. Thus, $\pi$ has a singular fiber, which is a contradiction.

Finally, we consider the case where $X$ is a non-minimal elliptic surface.
Let $C\subset X$ be an exceptional curve of the first kind.
Since the rational curve $C$ cannot be a fiber of $\pi  \colon X\to \CP^1$, it must be a branched multi-section of $\pi $.
Then, $C$ has a positive intersection number with an elliptic fiber of $f \colon E(\rho _1, \rho _2)\to \CP^1$.
This is a contradiction, since $E(\rho _1,\rho _2)$ is contractible.
\end{proof}

\begin{remark}\label{enoki/rmk}
Ichiro Enoki, in a private communication, told us that there is no compact complex surface of Calabi-Eckmann type. It is proven as follows.  If a compact complex surface contains a compact holomorphic curve representing a homologically trivial $2$-cycle, then it must be a surface of Class VI, a Hopf surface, a parabolic Inoue surface, or an Enoki surface \cite{En80}. In each case, the holomorphic curve contains a loop representing a generator of the fundamental group of the surface.
This fact leads to an alternative proof of Theorem \ref{compact}.
\end{remark}

\begin{proof}[Proof of Theorem \ref{meromorphic}]
We begin with statement (1). Let $h$ be a meromorphic function on $E(\rho _1,\rho _2)$ and let $(h) = (h)_0 - (h)_{\infty}$ be the associated divisor, see Griffiths and Harris \cite[pages 36, 131]{GH78}. Then, $(h)_0$ and $(h)_{\infty}$ are codimension-$1$ analytic subsets and the indeterminacy set $N = (h)_0 \cap (h)_{\infty}$ is an analytic subset of codimension greater than $1$, implying that it is a finite set.

The restriction of the meromorphic function $h$ to $E(\rho _1,\rho _2) - N$ is a holomorphic map $h_| \colon  E(\rho _1,\rho _2) - N\to \CP^1$.
All but finitely many compact fibers of $f$ are contained in $E(\rho _1,\rho _2) - N$,
and they are also homotopically trivial (cf. the end of Section \ref{DKZ}) in the complement of $N$.

By the same argument in the proof of Theorem \ref{compact}, we see that $h$ is constant on the compact fibers of $f$, and then it is constant on all fibers of $f$.
Therefore, $h$ is the pullback of a meromorphic function $s$ on $\CP^1$, implying that $f^* \colon \M(\CP^1)\to  \M(E(\rho _1, \rho _2))$ is surjective. On the other hand, $f^*$ is clearly injective, and this concludes the proof of (1).

\smallskip
Now, we prove the statement (2).
As in the Introduction, let $\lb(k)$ denote the holomorphic line bundle over $\CP^1$ with first Chern number $k \in \Z$. It is well known that $\lb(k)$ is holomorphically trivial if and only if $k = 0$.

Consider the decomposition $E(\rho_1, \rho_2) = W_1 \cup_j W_2$ as in Section \ref{DKZ}.
So, $W_1$ and $W_2$ are the preimages by $f$ of two open disks $\Delta_1 = \Delta(\rho_1)$ and $\Delta_2 = \Delta(\rho_0^{-1})$ whose union is $\CP^1$.
We have to show that if $L_k = f^*(\lb(k))$ is trivial, then $k = 0$.

Since $\lb(k)$ is holomorphically trivial over each of the disks $\Delta_1$ and $\Delta_2$, there are nowhere vanishing holomorphic sections $\sigma_i$ of $\lb(k)$ over $\Delta_i$, for $i=1, 2$. Then, the pullback section $f^*(\sigma_i)$ is a trivialization of $L_k$ over $W_i$, $i=1, 2$.

If $L_k$ is holomorphically trivial, there is a nowhere vanishing global holomorphic section $\tau$. Hence, there are holomorphic functions $\tau_i \in \mathcal{O}^*(W_i)$, $i=1,2$, such that \[ \tau_{|W_i} = \tau_i f^*(\sigma_i) \,\]

Now, we show that $\tau _1$ and $\tau _2$ are the pullbacks of holomorphic functions on $\Delta _1$ and $\Delta _2$.
On $W_1$, the fibers of $f$ are all compact holomorphic curves. Hence, $\tau _1$ is constant along the fibers of $f$.
Thus, $\tau _1$ is the pullback $f^{\ast }(u_1)$ of some holomorphic function $u_1$ on $\Delta _1$.
Then, on the intersection $V = W_1\cap W_2$, we have
\[f^{\ast }(u _1\sigma _1)=\tau _2f^{\ast }(\sigma _2). \,\]
Hence, $\tau _2$ is fiberwise constant on $V$. By analyticity, it is fiberwise constant over $W_2$.
Hence, there is a holomorphic function $u_2$ on $\Delta _2$ such that $\tau _2=f^{\ast }(u_2)$.
Then, $u_1\sigma _1$ and $u_2\sigma _2$ define a nowhere vanishing holomorphic global section of $\lb(k)$.
Hence $\lb(k)$ is trivial, and so $k = 0$.

Next, we prove that $\omega_{E(\rho _1,\rho _2)} = L_{-2}$.

Set $Y = E(\rho _1,\rho _2)$. The canonical line bundle $\omega_Y$ is by definition the determinant
of the contangent bundle $T^* Y$, that is $\omega_Y = \Lambda^2 (T^* Y)$.

Let $(z,w)$ be the coordinates of $\C^{\ast }\times \Delta (\rho _1)$. An easy computation shows that the holomorphic $2$-form
\[ \sigma = \frac{\mathrm{d} z \wedge \mathrm{d} w }{z}\]
passes to quotient $\C^{\ast }\times \Delta (0, \rho _1)\slash \Z$, where, as above, the action is given by $n\cdot (z,w)=(zw^n,w)$.

Since $\sigma $ is defined on $\C^{\ast }\times \Delta (\rho _1)$, it induces a regular holomorphic $2$-form on $$\big(\C^{\ast }\times \Delta (0, \rho _1)\slash \Z \big)\cup \C^{\ast }= W_1 - \left\{q \right\},$$ where $q$ is the nodal singularity of the singular elliptic fiber. By Hartog's theorem, it can be extended over the point $q$.
Hence, $\sigma $ determines a regular holomorphic $2$-form on the whole elliptic fibration $W_1$.

Let $(u,t)$ be the coordinates of  $\Delta (1, \rho _2) \times \Delta (\rho _0, \rho _1)$. By the above identification of $V$ with $\Delta (1, \rho _2) \times \Delta (\rho _0, \rho _1)$ we see that:
\[ \sigma = \frac{\mathrm{d} u \wedge \mathrm{d} t }{u}\]
on $\Delta (1, \rho _2) \times \Delta (\rho _0, \rho _1)$.
By means of the biholomorphism $j$ we get that
\[ \sigma = -\frac{\mathrm{d} z}{ z} \wedge \frac{\mathrm{d} s}{s^2} \]
where here $(z,s)$ are coordinates of $\Delta (1, \rho _2) \times \Delta (\rho _0^{-1})$.

Thus, $\sigma $ gives rise to a meromorphic section of the canonical bundle of $Y$.
The polar set of $\sigma $ is $2F_2$, where $F_2$ is an annulus fiber of the map $f$. Hence we obtain that
$$\omega_Y = f^*(\lb(-2)).$$

\smallskip
Finally, we prove statement (3).
For a complex manifold $Y$, it is well known that the exponential function gives rise to the following short exact sequence of sheaves:
\[ 0 \to \mathbb{Z} \to \mathcal{O}_Y \to {\mathcal{O}_Y^*} \to 0 \, \, .\]

For $Y = E(\rho _1,\rho _2)$, being diffeomorphic to $\R^4$, we obtain the following isomorphism from the associated long exact sequence of cohomology groups
$$\mathrm{Pic}(Y) = \mathrm{H}^1(Y ,\mathcal{O}_Y^*) \cong  \mathrm{H}^1(Y,\mathcal{O}_Y).$$

By definition, $\mathrm{H}^1(Y,\mathcal{O}_Y)$ is a complex vector space.
Hence, $\mathrm{Pic}(E(\rho _1,\rho _2))$ is isomorphic to the additive group of a complex vector space. We have already proved that $\mathrm{Pic}(E(\rho _1,\rho _2))$ is not trivial. Therefore, it is uncountable.
\end{proof}

\section{The proofs of Theorems \ref{R^{2n}} and \ref{noncpt/thm}}\label{2n}

\begin{proof}[Proof of Theorem \ref{R^{2n}}] The first sentence of the statement is straightforward, because the Calabi-Eckamnn type surface $E(\rho_1, \rho_2)$ is holomorphically embedded in the total space as the zero section of the bundle. We prove the second sentence.

Let $Q^{n+2}(\rho_1, \rho_2, k) \cong \R^{2n+4}$ be the total space of the holomorphic vector bundle $L_{k_1}\oplus \cdots \oplus L_{k_n}$, and let $\pi_k$ be its projection map, that is $\pi_k \colon Q^{n+2}(\rho_1, \rho_2, k) \to E(\rho_1, \rho_2)$, with $k = (k_1,\dots, k_n)$.

Put $\xi_k = \lb(k_1) \oplus\cdots \oplus \lb(k_n)$. We have $\pi_k = f^*(\xi_k)$.
Then, $$Q^{n+2}(\rho_1, \rho_2, k) = \{(x,y) \in E(\rho_1, \rho_2) \times E(\xi_k) \mid f(x) = \xi_k(y)\},$$ where we denote by $E(\xi_k)$ the total space of $\xi_k$. The map $\pi_k$ is the projection on the first factor, and we denote by $\tilde f \colon Q^{n+2}(\rho_1, \rho_2, k) \to E(\xi_k)$ the projection on the second factor, as it is shown in the following commutative diagram.
$$\xymatrix{
Q^{n+2}(\rho_1, \rho_2, k) \ar[d]_{\pi_k} \ar[r]^-{\tilde f}&   E(\xi_k) \ar[d]^{\xi_k}\\
E(\rho_1, \rho_2) \ar[r]_-f&  \CP^1}$$

Next, we classify compact connected holomorphic curves on $Q^{n+2}(\rho_1,\allowbreak \rho_2,\allowbreak k)$.
Let $S$ be a compact Riemann surface and let $\iota  \colon  S\to Q^{n+2}(\rho_1, \rho_2, k)$ be a holomorphic immersion.
The holomorphic map $\xi_k \circ \tilde f \circ \iota  \colon  S\to \CP^1$ is null-homotopic, because $Q^{n+2}(\rho_1, \rho_2, k)$ is contractible, and hence it is constant (cf. the proof of Theorem \ref{compact} again).

Therefore, $(\tilde f \circ \iota)(S)$ is contained in a fiber of $\xi_k$, which is biholomorphic to $\C^n$. This implies that $\tilde f \circ \iota$ is constant.
Hence, $\pi_{k} \circ \iota \colon S \to E(\rho_1, \rho_2)$ must be a holomorphic immersion.
By the classification of compact holomorphic curves on $E(\rho_1, \rho_2)$, $(\pi_{k} \circ \iota) (S)$ is a compact fiber of $f$.

It follows that compact connected holomorphic curves in $Q^{n+2}(\rho_1, \rho_2, k)$ are of the form $F_p \times \{y\}$ where $F_p = f^{-1}(p)$ is a compact fiber of $f$, and $\xi_k(y) = p$, for some $p \in \CP^1$.

\smallskip

Now, suppose that there exists a biholomorphism $\Phi : Q^{n+2}(\rho_1, \rho_2, k)\to Q^{n+2}(\rho_1', \rho_2', k')$, where $k' = (k_1', \dots, k_n') \in \Z^n$ is another $n$-tuple with not decreasing components, and $1 < \rho_2' < (\rho_1')^{-1}$.
We are going to show that $(\rho _1, \rho _2, k)=(\rho _1', \rho _2', k')$.

By keeping the above notation, we consider the map $f' \colon E(\rho_1', \rho_2') \to \CP^1$ of the construction in Section \ref{DKZ}, the bundle $\pi_{k'}$, and the projection map $\tilde f' \colon Q^{n+2}(\rho_1', \rho_2', k') \to E(\xi_{k'})$ such that $\xi_{k'} \circ \tilde f' = f' \circ \pi_{k'}$.

Let $F_p \times \{y\}$ be a compact connected holomorphic curve in $Q^{n+2}(\rho_1, \rho_2, k)$. Then, $\Phi(F_p \times \{y\}) = F'_{p'} \times \{y'\}$ for certain $p' \in \CP^1$ and $y' \in E(\xi_{k'})$ such that $\xi_{k'}(y') = p'$, with $F'_{p'} = (f')^{-1}(p')$.

By the construction of Section \ref{DKZ}, there is a biholomorphism $F_p \cong F'_{p'}$ if and only if $p = p'$ (by considering the moduli of elliptic fibers).

By analyticity, the equality $\Phi(F_p \times \{y\}) = F'_{p} \times \{y'\}$ must hold for all $p \in \CP^1$ and for all $y \in \xi_k^{-1}(p)$, where $y' \in \xi_{k'}^{-1}(p)$ is uniquely determined by $p$ and $y$. Therefore, there are two biholomorphisms $\varphi \colon E(\rho_1, \rho_2) \to E(\rho_1', \rho_2')$ and $\psi \colon E(\xi_k) \to E(\xi_{k'})$, such that $\Phi(x, y) = (\varphi(x), \psi(y))$ for all $(x, y) \in Q^{n+2}(\rho_1, \rho_2, k)$.
An application of the main theorem of \cite{DKZ15} yields $(\rho_1, \rho_2) = (\rho_1', \rho_2')$.

Note that $\psi $ is a fiberwise biholomorphism (that is, $\xi_{k'} \circ \psi = \xi_k$), but is not necessarily a linear bundle isomorphism. So, we take the fiber derivative of $\psi$ evaluated along the zero section of $\xi_k$.
By identifying the fibers of $\xi_k$ and $\xi_{k'}$ with the corresponding tangent spaces at a suitable point,
we obtain a linear isomorphism $\xi_k \cong \xi_{k'}$ of holomorphic vector bundles. Then, the Birkhoff-Grothendieck theorem \cite{Gr57} tells us that $k = k'$.
\end{proof}

\begin{proof}[Proof of Theorem \ref{noncpt/thm}]
Teichner and Vogt proved in an unpublished paper that any oriented 4-manifold admits a $\spinc$-structure (see Gompf and Stipsicz \cite[Remark 5.7.5]{GS99} for a sketch of their proof). By Gompf \cite{Go97}, a $\spinc$-structure is a homotopy class of complex structures over the 2-skeleton that are extendable over the 3-skeleton. Since $M$ is non-compact, it has the homotopy type of a 3-complex. It follows that $M$ admits an almost complex structure that is compatible with the given orientation.

There is a nowhere zero vector field on $M$, because $M$ is non-compact. Thus, the tangent bundle $TM$, regarded as a rank-two complex vector bundle, splits as the Whitney sum of a complex line bundle $\xi$ and a complex trivial line bundle $\varepsilon^1$, that is $TM = \xi \oplus \varepsilon^1$.

Let $t \colon M \to \CP^\infty$ be a classifying map for $\xi$, so that $\xi$ is the pullback of the tautological line bundle over $\CP^\infty$. Up to homotopy we can assume that $t$ takes values in the 3-skeleton of $\CP^\infty$, which is $\CP^1$.

Then, $TM$ is isomorphic, as a real vector bundle, to a pullback of an oriented non-trivial rank-four real vector bundle over $S^2$. Since $\pi_1(\SO(4))=\Z_2$, there is only one such bundle up to isomorphisms, which is equivalent to the restriction to $\CP^1 \subset \overline{\CP}^2 - \{\pt\}$ of $T(\overline{\CP}^2 - \{\pt\})$.

Hence, there is a monomorphism of real vector bundles $G \colon T M \to T (\overline{\CP}^2 - \{\pt\})$. By Phillips theorem \cite{P67}, there is an orientation-preserving immersion $g \colon M \to \overline{\CP}^2 - \{\pt\}$.

Notice that $\overline{\CP}^2 - \{\pt\}$ admits a Calabi-Eckmann type complex structure, obtained by taking the blow up of $E(\rho_1, \rho_2)$ at a point (cf. \cite[Corollary 6]{DKZ15}). Let us denote by $P(\rho_1, \rho_2) \cong \overline{\CP}^2 - \{\pt\}$ such blow up. It is still true that any holomorphic torus in $P(\rho_1, \rho_2)$ away from the blow up point is contained in a 4-ball.

Now, take a 4-ball $D \subset M$ where the restriction of $g$ is an embedding. Up to isotopy we can assume that $g(D)$ contains a holomorphic torus of $P(\rho_1, \rho_2)$. Then, the complex structure on $M$ induced by $g$ is of Calabi-Eckmann type.
\end{proof}

\begin{remark}
The proof of Theorem \ref{noncpt/thm} can be slightly modified to show that any non-compact connected oriented 4-manifold $M$ can be immersed into $\CP^2$, implying the known fact that $M$ admits a
K\"ahler complex structure.
With Theorem~\ref{noncpt/thm}, we have shown that any non-compact connected oriented 4-manifold admits both
K\"ahler and non-K\"ahler complex structures.
\end{remark}

\section*{Acknowledgements}

Antonio J Di Scala and Daniele Zuddas are members of GNSAGA of INdAM.

The authors would like to thank Prof. Ichiro Enoki for Remark \ref{enoki/rmk}.

\end{document}